\theoremstyle{definition}
\newtheorem{theorem}{Theorem}
\newtheorem{lemma}{Lemma}
\newtheorem{remark}{Remark}
\newtheorem{definition}{Definition}
\newtheorem{corollary}{Corollary}
\title{Countable separation property for associative algebras}
\author{A. Petukhov}
\begin{document}
\maketitle
\begin{abstract}
    For an associative algebra $A$ with a simple module $M$ with trivial endomorphisms and trivial annihilator we verify the countable separation property (CSP), i.e. we prove that there exists a list of nonzero elements $a_1, a_2,\ldots$ of $A$ such that every two-sided ideal of $A$ contains at least one such $a_i$. 
    Based on this result we verify the countable separation property for a free associative algebra with finite or countable set of generators over any field. 
    The countable separation property was studied before in the works of Dixmier and others but only in the context of Noetherian algebras (and a free associative algebra is very far from being Noetherian).
\end{abstract}
\section{Introduction}
Let $A$ be an at most countable-dimensional associative algebra with 1 over a field $\mathbb K$. Recall that $A$ is called {\it primitive} if it admits a simple module with trivial annihilator. 
\begin{definition}
    We say that an associative algebra $A$ satisfies {\it countable separation property} (CSP for short) if there exists a list $a_1, a_2,\ldots\in A\backslash(0)$ such that every two-sided ideal of $A$ contains at least one $a_i$. 
    We further say that such an algebra $A$ satisfies E-CSP (extension stable CSP) if there exists a list $a_1, a_2,\ldots\in A\backslash(0)$ such that for every field extension $\widetilde{\mathbb K}$ of $\mathbb K$ every two-sided ideal of $A\otimes_{\mathbb K}\widetilde{\mathbb K}$ contains at least one $a_i$. 
    It is very clear that E-CSP implies CSP. 
    An attached list $a_1, a_2, \ldots$ will be called {\it separating} and {\it E-separating} respectively. 
\end{definition}
If $A$ satisfies CSP then all the two-sided ideals of $A$ splits into countably many families where the  $i$th family consists of ideals containing $a_i$. 
The ideals of the $i$th family can be identified with the two-sided ideals of $A/Aa_iA$ (a quotient of $A$ by the two-sided ideal generated by $a_i$). If these quotients also satisfies CSP then we can consider a new list $a_1', a_2',\ldots$ refining the splitting on families. 
Thus the concept of CSP gives a certain approach to the classification of the two-sided ideals of associative algebras. 
As a potential output of this approach we can get a list of elements $a_1, a_2, \ldots$ such that an ideal $I$ can be determined by the sublist of $a_1, a_2, \ldots$ of elements contained in $I$.  
This definitely works if $A$ contains only countably many two-sided ideals. 
For example this is the case for group algebras of countable limits of finite simple groups.

We will also use CSP and E-CSP as an adjective: `A is CSP' is equivalent to `A satisfies CSP' in the text.

Note that $A$ is CSP if and only if there exists a countable list of two-sided ideals $J_1, J_2,\ldots$ such that each two-sided ideal contains at least one $J_i$. 
Thus the countable separation property is a condition on the lattice of two-sided ideals of an associative algebra. 
In this form the CSP was given in~\cite{I, Dix2} but it was verified only in certain cases: if $A$ isomorphic to the universal enveloping algebra of a finite-dimensional Lie algebra~\cite{Dix2} or if $A$ is a countably presented right Noetherian algebra~\cite{I}. 
In both cases the authors have efficiently verified that the respective algebras are E-CSP (but it was not stated explicitly). 
Also note that an analogue of the countable separation property can be stated for any partially ordered set.

The main result of this work is as follows.
\begin{theorem}\label{T:stab} Let $A$ be an at most countable-dimensional associative algebra with 1 over a field $\mathbb K$ and let $\widetilde{\mathbb K}$ be an arbitrary extension of $\mathbb K$. \\
(a) if $A$ admits a simple module $M$ with the trivial annihilator such that ${\rm Hom}_{A}(M, M)=\mathbb K$ then $A$ is E-CSP.\\
(b) if $A\otimes_{\mathbb K}{\widetilde{\mathbb K}}$ admits a simple module $M$ with the trivial annihilator such that  ${\rm Hom}_{A\otimes_{\mathbb K}{\widetilde{\mathbb K}}}(M, M)=\mathbb K$ then $A$ is E-CSP.
\end{theorem}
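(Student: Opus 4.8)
The plan is to produce \emph{one} countable list that survives every base change, and the key reduction is to replace $A$ by a countable $\mathbb{K}_0$-form sitting inside it. Fix a $\mathbb{K}$-basis $f_1,f_2,\ldots$ of $A$; the structure constants of the multiplication lie in $\mathbb{K}$ and form a countable set, so they generate a countable subfield $\mathbb{K}_0\subseteq\mathbb{K}$ over the prime field. Set $A_0=\bigoplus_i \mathbb{K}_0 f_i$, a countable $\mathbb{K}_0$-subalgebra with $A_0\otimes_{\mathbb{K}_0}\mathbb{K}=A$, and hence $A\otimes_{\mathbb{K}}\widetilde{\mathbb{K}}=A_0\otimes_{\mathbb{K}_0}\widetilde{\mathbb{K}}$ for every extension $\widetilde{\mathbb{K}}$. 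I would propose the separating list to be an enumeration of $A_0\setminus(0)$, so that the whole theorem reduces to the single assertion that \emph{for every extension $\widetilde{\mathbb{K}}$, every nonzero two-sided ideal $I$ of $A_0\otimes_{\mathbb{K}_0}\widetilde{\mathbb{K}}$ meets $A_0\setminus(0)$}. The one structural input I will need from the hypothesis is that the \emph{extended centroid} of $A_0$ equals $\mathbb{K}_0$, equivalently that every $A_0$-bimodule homomorphism from a nonzero two-sided ideal of $A_0$ into $A_0$ is multiplication by a scalar in $\mathbb{K}_0$.

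To establish $C(A_0)=\mathbb{K}_0$ I would descend the module. In case (a), choosing a $\mathbb{K}$-basis $e_1,e_2,\ldots$ of $M=A m_0$ (at most countable-dimensional) and enlarging $\mathbb{K}_0$ to contain the countably many action constants $f_i e_l=\sum d_{il}^{l'}e_{l'}$, I get an $A_0$-submodule $M_0=\bigoplus_l\mathbb{K}_0 e_l$ with $M=M_0\otimes_{\mathbb{K}_0}\mathbb{K}$. Since $\mathrm{Hom}_A(M,M)=\mathbb{K}$ says exactly that $A$ is dense in $\mathrm{End}_{\mathbb{K}}(M)$, and density is a condition on finitely many vectors at a time, I would check that it descends: $A_0$ is dense in $\mathrm{End}_{\mathbb{K}_0}(M_0)$, so $M_0$ is faithful and simple with $\mathrm{Hom}_{A_0}(M_0,M_0)=\mathbb{K}_0$, whence $C(A_0)=\mathbb{K}_0$. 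Case (b) is handled the same way after first descending the given simple $A\otimes_{\mathbb{K}}\widetilde{\mathbb{K}}$-module to a countable subfield of $\widetilde{\mathbb{K}}$ and intersecting that subfield with $\mathbb{K}$ so that the resulting form $A_0$ still lies inside $A$; the only extra work is the bookkeeping showing that this intersection still contains all structure constants and that the extended centroid remains $\mathbb{K}_0$ after this controlled change of ground field.

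The heart of the argument is the extraction. Fix a $\mathbb{K}_0$-basis $\{\beta_s\}$ of $\widetilde{\mathbb{K}}$ with $\beta_0=1$; these are central in $A_0\otimes_{\mathbb{K}_0}\widetilde{\mathbb{K}}=\bigoplus_s A_0\beta_s$, so each $A_0\beta_s$ is a sub-bimodule isomorphic to $A_0$. Given a nonzero ideal $I$, I would pick $0\neq u=\sum_{j=1}^{m}u_j\beta_j\in I$ of \emph{minimal support length} $m$, where $u_1,\ldots,u_m\in A_0$ are automatically $\mathbb{K}_0$-linearly independent. For any element $D=\sum_\ell b_\ell\otimes c_\ell$ of the enveloping algebra $A_0\otimes_{\mathbb{K}_0}A_0^{\mathrm{op}}$, centrality of the $\beta_j$ gives $D\cdot u=\sum_j (D\cdot u_j)\beta_j\in I$ with support inside $\{1,\ldots,m\}$; by minimality, if one coordinate $D\cdot u_{j_0}$ vanishes then all of them do. Hence $\mathrm{Ann}(u_1)=\mathrm{Ann}(u_2)$ in $A_0\otimes_{\mathbb{K}_0}A_0^{\mathrm{op}}$, and the rule $D\cdot u_1\mapsto D\cdot u_2$ is a well-defined $A_0$-bimodule isomorphism $\Phi\colon A_0u_1A_0\to A_0u_2A_0$ with $\Phi(u_1)=u_2$.

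Now I invoke $C(A_0)=\mathbb{K}_0$: the bimodule map $\Phi$ is multiplication by some $\lambda\in\mathbb{K}_0$, so $u_2=\lambda u_1$, contradicting the linear independence of the $u_j$ as soon as $m\ge 2$. Therefore $m=1$, i.e. $u=u_1\beta_1$, and since $\beta_1$ is a central unit, $u_1=\beta_1^{-1}u\in I\cap A_0$ is a nonzero element of the list. I expect the main obstacle to be the \emph{rigidity of bimodule maps} packaged as $C(A_0)=\mathbb{K}_0$ — concretely, verifying that a two-sided $A_0$-module map from a nonzero ideal into $A_0$ must be scalar, which I would prove by letting such a map act on the faithful simple module $M_0$ (using $U M_0=M_0$ for a nonzero ideal $U$) to realize it inside $\mathrm{Hom}_{A_0}(M_0,M_0)=\mathbb{K}_0$. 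The secondary technical point is the clean descent of density (and hence of the endomorphism field) to the countable form, together with the field-intersection bookkeeping needed to keep the form inside $A$ in case (b).
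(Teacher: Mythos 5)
Your argument reaches the same conclusion by a genuinely different route. The paper proves a local statement (its Lemma~1): for each finite-dimensional subspace $V\subset A$ of dimension $d$ there are $d$ nonzero elements of $A$ such that every two-sided ideal of $A\otimes_{\mathbb K}\widetilde{\mathbb K}$ meeting $V\otimes_{\mathbb K}\widetilde{\mathbb K}$ contains one of them; this is proved by induction on $\dim V$ via the maps $\phi_x(w)=wxv-vxw$ (splitting $V$ into ${\rm Ker}\,\phi_x$ and ${\rm Im}\,\phi_x$), with the density theorem handling the degenerate case $vxw=wxv$, and the global list is assembled from an exhaustion of $A$. You instead descend to a countable form $A_0$ over a countable subfield $\mathbb K_0$, identify the extended centroid of $A_0$ with $\mathbb K_0$ via the faithful simple module, and run the classical minimal-support argument for ideals in scalar extensions of a centrally closed prime ring. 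Your route directly produces the separating list $A_0\setminus(0)$ --- which the paper records only in a remark (attributed to J.~Bell) as a consequence of its theorem --- and it treats all field extensions uniformly, so the separate descent argument the paper gives for part~(b) is absorbed. What you lose is the quantitative refinement ($d$ elements per $d$-dimensional subspace) and the direct generalization to uncountable-dimensional $A$ that the paper's inductive lemma provides.

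Two points need repair. First, for a basis $\{\beta_s\}$ fixed in advance the coordinates $u_1,\dots,u_m$ of a support-minimal $u\in I$ need \emph{not} be $\mathbb K_0$-linearly independent (e.g.\ $u=u_1\beta_1+u_1\beta_2$ can be minimal for that particular basis); either minimize over all representations $u=\sum_j u_j\gamma_j$ with the $\gamma_j$ linearly independent over $\mathbb K_0$, or simply conclude from $u_j=\lambda_ju_1$ for all $j$ that $u=u_1\gamma$ with $\gamma\in\widetilde{\mathbb K}^{\times}$ central, whence $u_1=\gamma^{-1}u\in I\cap A_0$. Second, and more substantively, your treatment of (b) is under-specified: the countable field $\mathbb K_1$ over which the simple $A\otimes_{\mathbb K}\widetilde{\mathbb K}$-module descends is a subfield of $\widetilde{\mathbb K}$ containing the action constants, and there is no reason for it to lie in $\mathbb K$, so ``intersecting with $\mathbb K$'' does not produce a form of $A$ that still carries the module. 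The fix is to descend the module to $A_1:=A_0\otimes_{\mathbb K_0}\mathbb K_1$, conclude $C(A_1)=\mathbb K_1$, and then \emph{deduce} $C(A_0)=\mathbb K_0$: a bimodule map $f\colon U\to A_0$ on a nonzero ideal of $A_0$ extends to $U\otimes_{\mathbb K_0}\mathbb K_1\to A_1$, hence equals multiplication by some $\lambda\in\mathbb K_1$, and $\lambda\in\mathbb K_0$ because $\lambda U\subset A_0$ and any $0\ne u\in U$ has a nonzero $\mathbb K_0$-coordinate in the chosen basis of $A_0$ (one also needs $A_0$ prime, which follows from primeness of the primitive ring $A_1$). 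With these two repairs the proof is complete.
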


Note that ${\rm Hom}_{A}(M, M)=\mathbb K$ is automatically satisfied for any simple $A$-module $M$ if $\mathbb K$ is algebraically closed and uncountable~\cite[Proposition, Section~1.7, Chapter~9]{MR}. 
For example this is the case if $A$ is generated by an at most countable set of generators over the field of complex numbers $\mathbb C$ and $A$ has a simple module with trivial annihilator. 

There are several examples of associative algebras (universal enveloping algebras of locally simple Lie algebras~\cite{PP1, PP2, PP3}, locally nilpotent Lie algebras~\cite{IP}, Witt Lie algebra and Virasoro Lie algebra~\cite{PS1, PS2}) for which such a list $a_1, a_2, \ldots$ was constructed explicitly. 
Moreover these constructions lead to very elegant and precise statements about the radical ideals of these algebras, i.e. the ideals which coincide with their Jacobson radical. In some cases a full classification of such ideals was given~\cite{PP3}. 
A similar job is supposed to be done for affine Lie algebras and loop Lie algebras in my work in progress with S. Sierra~\cite{PS3} based on~\cite{BS}.

Another interesting class of examples for which the countable separation property was studied is a class of group algebras~\cite{K, Z, FL, F, L}.

The conclusion of the above discussion is that in many cases there are only countably many two-sided ideals for associative algebras with simple modules and if it is not the case then most probably these ideals split into countably many families. 
The countable separation property provides a formal background for this conclusion. 
In any given case the results use an available structure theory for the respective associative algebra and it seems that the statement of Theorem~\ref{T:stab} is close to be the best possible general statement.

Note that a nontrivial center of an associative algebra $A$ is an obstacle for being CSP: $\mathbb K[x]$ is \underline{not} CSP for all fields $\mathbb K$ --- this is equivalent to the statement that there are infinitely many irreducible polynomials over any field. 
On the other hand a nontrivial center is an obstacle for having a simple module with trivial annihilator (and under some assumptions `centerless' is equivalent to `primitive'). Thus the condition of `being primitive' from Theorem~\ref{T:stab} may be is not the best possible but seems to be close to it.

\begin{definition} Let $A$ be an associative algebra with 1. We say that $A$ {\it stabilizes} with respect to a list  $a_1, a_2, \ldots$ if $A$ is CSP with respect to this list and $a_{i+1}\in Aa_iA$. 
It is easy to see that any two-sided ideal of $A$ contains all $a_i$ for all $i$ starting from some number $N$. 
We define E-stabilization similarly. 
\end{definition}
It is easy to see that if $A$ is a domain then $A$ is CSP if and only if $A$ E-stabilizes. 
Similarly $A$ is E-CSP if and only if $A$ E-stabilizes.

Recall that if $\mathbb K$ is of characteristic 0 then the subfield generated by 1 is canonically isomorphic to $\mathbb Q$. 
And if $\mathbb K$ is of characteristic $p>0$ then 1 generates a subfield which is canonically isomorphic to $\mathbb Z/p\mathbb Z$. 
We will denote these subfields generated by 1 by $\underline{\mathbb K}$.

Recall that a free algebra with finitely many generators is primitive~\cite{BC, Ph} and thus we can apply Theorem~\ref{T:stab} to such an algebra under some relatively mild assumptions on $\mathbb K$. 
The following theorem is a refinement of the above statement. 
We consider this theorem as the second main result of the article. 
\begin{theorem} \label{T:free}
(a) Let $F_n(\mathbb K)$ be a free associative algebra with $n$ generators over a field $\mathbb K$. 
Then for any two-sided ideal $I\subset F_n$ we have $I\cap F_n(\underline{\mathbb K})\ne(0)$.\\
(b) Let $F_{\mathbb N}(\mathbb K)$ be a free associative algebra with a countable set of generators over a field $\mathbb K$. Then for any two-sided ideal $I\subset F_{\mathbb N}(\mathbb K)$ we have $I\cap F_{\mathbb N}(\underline{\mathbb K})\ne(0)$.
\end{theorem}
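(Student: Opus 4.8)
The plan is to derive both parts from Theorem~\ref{T:stab}(b) together with the primitivity of free algebras, the decisive point being that the separating list can be chosen inside the prime-field subalgebra $F_n(\underline{\mathbb K})$ while still separating after base change to $\mathbb K$. Throughout I use the canonical identification $F_n(\mathbb K)=F_n(\underline{\mathbb K})\otimes_{\underline{\mathbb K}}\mathbb K$, which holds because the monomials in the generators form a basis independent of the ground field.

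I would treat part~(a) for $n\ge2$, where $F_n$ is primitive; the remaining case $n=1$ produces the commutative polynomial ring $\mathbb K[x]$, which lies outside the scope of Theorem~\ref{T:stab} (as recalled above, $\mathbb K[x]$ is not even CSP). Set $A=F_n(\underline{\mathbb K})$, an at most countable-dimensional algebra over the countable prime field $\underline{\mathbb K}$. First I would fix an uncountable algebraically closed field $\Omega$ of the same characteristic as $\mathbb K$; having the correct characteristic, $\Omega$ is an extension of $\underline{\mathbb K}$, and $A\otimes_{\underline{\mathbb K}}\Omega\cong F_n(\Omega)$ is again at most countable-dimensional over $\Omega$. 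Since $n\ge2$, this algebra is primitive by~\cite{BC, Ph} and so has a simple module $M$ with trivial annihilator; since $\Omega$ is algebraically closed and uncountable, \cite[Proposition, Section~1.7, Chapter~9]{MR} forces ${\rm Hom}_{F_n(\Omega)}(M,M)=\Omega$. Theorem~\ref{T:stab}(b), applied with base field $\underline{\mathbb K}$ and extension $\Omega$, then shows that $A=F_n(\underline{\mathbb K})$ is E-CSP, with an E-separating list $a_1,a_2,\ldots\in F_n(\underline{\mathbb K})\setminus(0)$. Specializing the defining property of this list to the extension $\mathbb K$ of $\underline{\mathbb K}$, every two-sided ideal $I$ of $F_n(\mathbb K)=F_n(\underline{\mathbb K})\otimes_{\underline{\mathbb K}}\mathbb K$ contains some $a_i$, whence $I\cap F_n(\underline{\mathbb K})\ne(0)$.

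Part~(b) I would reduce to part~(a) by a finite-support argument. Given a nonzero two-sided ideal $I\subset F_{\mathbb N}(\mathbb K)$, choose a nonzero $f\in I$; it involves only finitely many generators, and since infinitely many are available I may enlarge this collection to a set $S$ of $m\ge2$ generators. Writing $F_S(\mathbb K)\cong F_m(\mathbb K)$ for the subalgebra they generate, the intersection $I\cap F_S(\mathbb K)$ is a two-sided ideal of $F_S(\mathbb K)$ --- multiplying an element of the intersection on either side by an element of $F_S(\mathbb K)$ keeps it inside both $I$ and $F_S(\mathbb K)$ --- and it is nonzero since it contains $f$. Applying part~(a) to it produces a nonzero element of $(I\cap F_S(\mathbb K))\cap F_S(\underline{\mathbb K})\subseteq I\cap F_{\mathbb N}(\underline{\mathbb K})$, which is exactly the assertion.

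The substance lies in part~(a), and the main obstacle is to confirm that base change is compatible with the hypotheses of Theorem~\ref{T:stab}(b): one checks the identification $F_n(\underline{\mathbb K})\otimes_{\underline{\mathbb K}}\Omega\cong F_n(\Omega)$, the countability of its dimension (so that \cite{MR} applies), and primitivity over $\Omega$. The genuinely crucial feature, however, is conceptual: E-CSP delivers a separating list that already lives in $F_n(\underline{\mathbb K})$ and keeps separating over every extension, in particular over $\mathbb K$. This is precisely what ordinary CSP would not give and is what lets us say anything about $I\cap F_n(\underline{\mathbb K})$; it also explains why routing the argument through an auxiliary uncountable algebraically closed field $\Omega$, merely to secure the Schur condition ${\rm Hom}(M,M)=\Omega$ for free, costs us nothing.
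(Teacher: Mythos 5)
Your proof is correct and follows essentially the same route as the paper: pass to $F_n(\underline{\mathbb K})$, base-change to an uncountable algebraically closed extension to secure primitivity and the Schur condition ${\rm Hom}(M,M)=\Omega$, invoke Theorem~\ref{T:stab} to get E-CSP with a separating list inside $F_n(\underline{\mathbb K})$, and reduce (b) to (a) via the finite-support/filtration argument. Your restriction to $n\ge2$ is a correct sharpening that the paper glosses over: for $n=1$ the statement actually fails, e.g.\ the ideal $I=(x-t)\subset\mathbb Q(t)[x]$ meets $\mathbb Q[x]$ only in $(0)$ since $t$ is transcendental over $\mathbb Q$.
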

The associative algebras $F_n(\underline{\mathbb K})$ and $F_{\mathbb N}(\underline{\mathbb K})$ are countable and thus they are E-CSP with respect to $F_n(\mathbb K)\backslash(0)$ and $F_{\mathbb N}(\mathbb K)\backslash(0)$ respectively. 

\begin{remark}
It seems that $F_n(\mathbb K)\backslash(0)$ and $F_{\mathbb N}(\mathbb K)\backslash(0)$ are not the best choice for the list $f_1, f_2, \ldots$ for algebras $F_n(\mathbb K)$ and $F_{\mathbb N}(\mathbb K)$. 
At least in the cases considered in~\cite{PP1, PS2, IP} the choice was more conceptual and lead to conceptual results. There is an option to do `a simple choice' for any countable associative algebra. 

Indeed, we can choose a countable basis $a_1, a_2, \ldots$ of $A$ and then the map $$A\otimes_{\mathbb K}A\to A, \hspace{10pt}a\otimes b\to ab,$$ defines the set of structural constants $c_{ij}^k$. 
Put $\underline{\mathbb K}(c_{ij}^k)$ to be a field generated in $\mathbb K$ by $\underline{\mathbb K}$ and all $c_{ij}^k$. 
Denote by $\underline{A}$ the vector space over $\underline{\mathbb K}(c_{ij}^k)$ with basis $a_1, a_2,...$. It is easy to see that $\underline A$ is countable, $A$ is an associative algebra and  $A\cong\underline{A}\otimes_{\underline{\mathbb K}}\mathbb K$. 
Under the assumption that $A$ is as in Theorem~\ref{T:stab} we can choose a separating list to be $\underline{A}\backslash(0)$. 
This argument with $\underline{A}$ was communicated to me by Jason Bell and I wish to thank Jason Bell for it.
\end{remark}

The advantage of Theorem~\ref{T:stab} compared to~\cite{I, Dix2} is that we do not assume in Theorem~\ref{T:stab} that $A$ is a Goldie ring (and this was implicitly assumed in the above mentioned articles). In particular Theorem~\ref{T:stab} implies Theorem~\ref{T:free} and it is unclear how to deduce Theorem~\ref{T:free} from~\cite{I, Dix2}. 

Let $f_1, f_2,\ldots$ be a sequence stabilizing $F_2(\underline{\mathbb K})$ (at least one such a sequence exists thanks to Theorem~\ref{T:free}).
The existence of such a sequence has the following corollary.

\begin{corollary} 
Let $A$ be an associative algebra over $\mathbb K$ and $a, b\in A$. Then either $f_i(a, b)=0$ for all $i$ starting from certain $N$, or the map$$F_2(\mathbb K)\cong\mathbb K\!<\!x, y\!>\to A,\hspace{10pt}x\to a, y\to b,$$ is injective.
\end{corollary}
Note that for many associative algebras $A$ there are no injective homomorphisms from $F_2(\mathbb K)$ to $A$. 
For example this is the case for the associative algebras of subexponential growth and thus for all algebras of finite Gelfand-Kirillov dimension. 

Theorem~\ref{T:stab} was inspired by the following result on differential algebras.
\begin{theorem}\label{T:stadD}
    Let $A$ be a countable-dimensional commutative domain over a field $\mathbb K$ and let $D_1, D_2,\ldots$ be derivaritions of $A$. 
    We denote by  $Q(A)$ the quotient field of $A$. If the differential center of $Q(A)$ is trivial (i.e. $\{f\in Q(A)\mid \forall i~ D_if=0\}=\mathbb K$) then there exists a sequence $$a_1, a_2,\ldots\in A\backslash(0)$$ such that each ideal $I$ with $D_iI\subset I$ for all $i$ contains all $a_i$ starting from a certain $N$.
\end{theorem}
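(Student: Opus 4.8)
The plan is to reduce the statement to a claim about finite-dimensional subspaces and then to settle that claim with a generalized Wronskian. Fix a countable $\mathbb K$-basis $e_1, e_2, \ldots$ of $A$ and set $V_n = \mathrm{span}_{\mathbb K}(e_1, \ldots, e_n)$. For $v \ne 0$ write $\langle v\rangle$ for the smallest differential ideal of $A$ containing $v$, i.e. the ideal generated by $v$ and all iterated derivatives $D_{i_1}\cdots D_{i_k}v$. The first observation is that every nonzero differential ideal $I$ contains $\langle x\rangle$ for some nonzero $x\in V_n$ (pick any nonzero $x\in I$; it lies in some $V_n$, and $\langle x\rangle\subseteq I$ since $I$ is a differential ideal). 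Hence, setting $J_n := \bigcap_{0\ne v\in V_n}\langle v\rangle$, every nonzero differential ideal contains $J_N$ for all large $N$. The theorem then follows as soon as each $J_n\ne 0$: I would take $a_1\in J_1\setminus(0)$ and inductively $a_{n+1}=a_n w_{n+1}$ with $w_{n+1}\in J_{n+1}\setminus(0)$, which is nonzero because $A$ is a domain. Then $a_{n+1}\in(a_n)$ and $a_n\in J_n$ (as $J_{n+1}$ is an ideal), so a nonzero differential ideal $I\supseteq J_N$ contains $a_N$ and therefore all $a_n$, $n\ge N$.

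The core is thus the assertion $J_n\ne 0$, which I would prove by a generalized Wronskian. Extend the $D_i$ to the fraction field $F=Q(A)$. For words $\theta_1,\ldots,\theta_n$ in the $D_i$ and $f_1,\ldots,f_n\in A$, consider $W(f_1,\ldots,f_n)=\det(\theta_j f_i)_{i,j}\in A$. Two formal properties drive everything. First, expanding the determinant along the $i$-th column — whose entries $\theta_1 f_i,\ldots,\theta_n f_i$ all lie in $\langle f_i\rangle$, the minors lying in $A$ — shows $W(f_1,\ldots,f_n)\in\langle f_i\rangle$ for every $i$. Second, $W$ is alternating $\mathbb K$-multilinear in $(f_1,\ldots,f_n)$, so replacing the basis $(e_1,\ldots,e_n)$ of $V_n$ by any basis $(v,v_2,\ldots,v_n)$ rescales $W$ by a nonzero determinant; combined with the first property in the first slot this gives $W(e_1,\ldots,e_n)\in\langle v\rangle$ for every nonzero $v\in V_n$. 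Hence the single element $W(e_1,\ldots,e_n)$ lies in $J_n$, and it remains only to choose the $\theta_j$ so that it is nonzero.

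The main obstacle is exactly this nonvanishing, where the hypothesis on the differential center enters and where care is needed since the $D_i$ are not assumed to commute. I claim that if $a_1,\ldots,a_n\in F$ are linearly independent over the constants $C:=\{f\in F:\ D_i f=0\ \forall i\}$, then some words $\theta_j$ make $\det(\theta_j a_i)\ne 0$; equivalently the covectors $(\theta a_1,\ldots,\theta a_n)$, over all words $\theta$, span $F^n$. I would prove this in the noncommutative setting by a minimal-support argument: if $\sum_i c_i(\theta a_i)=0$ for all words $\theta$, with $(c_i)\ne 0$ of minimal support and $c_1=1$ after scaling, then applying $D_j$ and using $\sum_i c_i (D_j\theta)a_i=0$ yields $\sum_i (D_j c_i)(\theta a_i)=0$ for all $\theta$, a relation of strictly smaller support; minimality forces $D_j c_i=0$ for all $i,j$, so $c_i\in C$, and $\theta=1$ gives a nontrivial $C$-linear dependence among the $a_i$, a contradiction. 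Applying this to $a_i=e_i$, which are $\mathbb K$-linearly independent while $C=\mathbb K$ by the triviality of the differential center, produces words $\theta_j$ with $W(e_1,\ldots,e_n)\ne 0$, establishing $J_n\ne 0$ and hence the theorem.
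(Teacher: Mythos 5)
Your proof is correct, but it takes a genuinely different route from the paper's. The paper proves, by induction on $\dim_{\mathbb K}V$, that for every finite-dimensional $V\subset A$ the intersection of all differential ideals meeting $V$ nontrivially is nonzero: fixing $v\in V$, it splits $V$ into the kernel and image of $\psi_{v;i}(w)=wD_iv-vD_iw$ (nonzero for some $i$ whenever $w\notin\mathbb Kv$, precisely because $D_i(w/v)=\psi_{v;i}(w)/v^2$ and the differential constants of $Q(A)$ are trivial), and multiplies together the elements obtained from the two smaller pieces, using that $A$ is a domain. Your argument replaces this recursion by a single closed-form witness: the generalized Wronskian $\det(\theta_je_i)$, which lies in $\langle v\rangle$ for every nonzero $v\in V_n$ by alternating multilinearity plus expansion along the row of derivatives of $v$, and which can be made nonzero by the minimal-support Wronskian lemma for (possibly non-commuting) derivations over the field of constants. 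In fact the paper's $\psi_{v;i}$ is exactly the $2\times 2$ case of your determinant, so your approach is the direct $n\times n$ generalization: it trades the induction for the classical nonvanishing lemma and produces a more explicit separating element (one determinant rather than a recursively constructed product), whereas the paper's kernel/image decomposition is the same mechanism that drives the noncommutative Theorem~\ref{T:stab}, where a Wronskian is not available. The steps you leave implicit all check out: the constants form a field containing $\mathbb K$, the relation space is an $F$-subspace so the normalization $c_1=1$ is legitimate, the support can only shrink under $D_j$, and the empty word must be (and is) included among the $\theta$'s both for $\det(\theta_j f_i)\in\langle f_i\rangle$ at the base level and for extracting the $C$-linear dependence at the end.
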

We can consider this result as a `countable separability for differential algebras with differentially trivial center'. Note that such algebras fit well into the picture of Dixmier-Moeglin equivalence (when it is applicable). 

Theorem~\ref{T:stadD} is a modification of~\cite[Lemma 3.1]{BLLM}. 
The scheme of proof of Theorem~\ref{T:stab} is similar to the scheme of proof of Theorem~\ref{T:stadD} with an addition given by~\cite[Lemma, Subsection~4.1.6]{Dix}. 
The condition that $A$ is countable is needed to construct a countable exhaustion of $A$ by finite-dimensional vector spaces. 
One can remove this condition but then the list $a_1, a_2, \ldots$ will be uncountable (the cardinality of the respective set lies inbetween $\dim_{\mathbb K}A$ and $\dim_{\mathbb K}A\times \mathbb N$ and thus it equals $\dim_{\mathbb K}A$ in most cases).

{\bf Acknowledgements.} I'd like to thank Susan Sierra, Jason Bell and Seva Gubarev for useful and stimulating discussions.
\section{Proofs}

\begin{proof}[Proof of Theorem~\ref{T:stab}(a)] 
Fix an extension $\widetilde{\mathbb K}$ of the field $\mathbb K$. 
Consider an exhaustion of $A$ by a countable sequence of finite-dimensional $\mathbb K$-vector spaces $L_1\subset L_2\subset\ldots$. 
Every two-sided ideal of $A\otimes_{\mathbb K}\widetilde{\mathbb K}$ intersects all $L_i\otimes_{\mathbb K}\widetilde{\mathbb K}$ for all $i$ starting from a certain $N$.

For a finite-dimensional $\mathbb K$-vector space $V$ we denote by ${\rm Var}(V)$ the set of two-sided ideals such that $I\subset  A\otimes_{\mathbb K}\widetilde{\mathbb K}$ with  $I\cap V\otimes_{\mathbb K}\widetilde{\mathbb K}\ne(0)$.

It is easy to see that Theorem~\ref{T:stab} is a corollary of the following lemma applied to each space $L_i$.
\begin{lemma}\label{L:stab} Under the assumptions of Theorem~\ref{T:stab} let $(0)\ne V\subset A$ be a finite-dimensional $\mathbb K$-vector space of dimension $d$. 
Then there exists $a_1, \ldots, a_d\in A\backslash(0)$ such that if $I\in{\rm Var}(V)$ then $a_i\in I$ for some $i$.\end{lemma}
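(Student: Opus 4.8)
The plan is to transport the statement to the action of $A$ on $M$ and then imitate the proof of Theorem~\ref{T:stadD}, with inner derivations playing the role of the derivations $D_i$. Since $M$ has trivial annihilator, $A$ embeds into $E:=\mathrm{End}_{\mathbb K}(M)$, and since $M$ is simple with $\mathrm{Hom}_A(M,M)=\mathbb K$ the Jacobson density theorem makes this image dense in $E$ in the finite topology. For $b,b'\in A$ the multiplication $\mu_{b,b'}\colon x\mapsto bxb'$ and all finite sums $\Psi=\sum_l\mu_{b_l,b_l'}$ preserve every two-sided ideal of $A$ and of $A\otimes_{\mathbb K}\widetilde{\mathbb K}$; write $\mathcal M$ for the set of such multipliers. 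Among them are the inner derivations $\mathrm{ad}(a)=\mu_{a,1}-\mu_{1,a}$. An operator $T\in E$ annihilated by every $\mathrm{ad}(a)$ commutes with $A$, hence lies in $\mathrm{Hom}_A(M,M)=\mathbb K$; so the family $\{\mathrm{ad}(a)\}_{a\in A}$ has only scalar constants, which is exactly the analogue inside $E$ of the hypothesis ``the differential center is trivial'' in Theorem~\ref{T:stadD}.

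Granting the separation statement below, the lemma follows cleanly. Fix a basis $e_1,\dots,e_d$ of $V$ and set $V_k=\langle e_1,\dots,e_k\rangle$. For each $k$ choose a multiplier $\Psi_k\in\mathcal M$ with $\Psi_k(e_i)=0$ for $i<k$ and $a_k:=\Psi_k(e_k)\ne 0$, and put these $a_k\in A\backslash(0)$ on the list. Now let $I\in\mathrm{Var}(V)$ and pick $0\ne v\in I\cap(V\otimes_{\mathbb K}\widetilde{\mathbb K})$, say $v=\sum_{i=1}^d e_i\otimes c_i$ with $c_i\in\widetilde{\mathbb K}$; let $k$ be the largest index with $c_k\ne 0$. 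Because each $\Psi_k$ has coefficients in $A$ and each $c_i$ is central,
$$\Psi_k(v)=\sum_{i\le k}\Psi_k(e_i)\otimes c_i=a_k\otimes c_k\in I,$$
and since $c_k$ is a unit of $A\otimes_{\mathbb K}\widetilde{\mathbb K}$ we conclude $a_k\in I$. Thus every $I\in\mathrm{Var}(V)$ contains one of $a_1,\dots,a_d$. Note that the list is manufactured once, over $\mathbb K$, and is used verbatim for every extension $\widetilde{\mathbb K}$: this is precisely what upgrades the conclusion to the E-statement of Theorem~\ref{T:stab}.

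The heart of the matter, and the step I expect to be the main obstacle, is the separation statement: for $\mathbb K$-linearly independent $e_1,\dots,e_k\in A$ there is a multiplier in $\mathcal M$ killing $e_1,\dots,e_{k-1}$ but not $e_k$ (equivalently, every finite-dimensional subspace of $A$ is cut out by multiplier conditions). This is exactly where triviality of $\mathrm{Hom}_A(M,M)$ is used, via the density theorem together with \cite[Lemma, Subsection~4.1.6]{Dix}. Two points make it delicate. Finite-dimensionality is essential: for $A=\mathrm{End}_{\mathbb K}(M)$ with $\dim_{\mathbb K}M=\infty$ the only multiplier annihilating the entire socle is $0$, so for infinite-dimensional subspaces the ``multiplier closure'' can be everything. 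And single multipliers $\mu_{b,b'}$ do not suffice; one must combine them, using that the trivial commutant lets iterated inner derivations separate every non-scalar operator (for instance $\mathrm{ad}(\partial)^2$ kills $1$ and $x$ but not $x^2$ in the Weyl algebra). I would therefore prove the separation statement by first choosing $w_1,\dots,w_t\in M$ so that $e\mapsto(ew_1,\dots,ew_t)$ is injective on $\langle e_1,\dots,e_k\rangle$, and then combining density with the iterated-inner-derivation separation to build the required $b_l,b_l'$ --- the noncommutative counterpart of the iterated-derivative argument of \cite{BLLM} underlying Theorem~\ref{T:stadD}.
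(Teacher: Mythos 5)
Your reduction of the lemma to the ``separation statement'' is sound: given, for a basis $e_1,\dots,e_d$ of $V$, multipliers $\Psi_k=\sum_l\mu_{b_l,b_l'}$ with coefficients in $A$ killing $e_1,\dots,e_{k-1}$ but not $e_k$, your flag argument (apply $\Psi_k$ to $\sum e_i\otimes c_i$ with $k$ the top nonzero coefficient, then cancel the unit $c_k$) does produce the $d$ elements and, because the $\Psi_k$ are built over $\mathbb K$, it works uniformly in $\widetilde{\mathbb K}$. The problem is that the separation statement itself --- which you correctly identify as the heart of the matter and which is essentially \cite[Lemma, Subsection~4.1.6]{Dix} for a faithful simple module --- is never proved. ``Combining density with the iterated-inner-derivation separation'' is a plan, not an argument, and the only precise fact you offer in its support (an operator annihilated by all $\mathrm{ad}(a)$ is scalar) shows only that a single non-scalar element can be \emph{detected} by one inner derivation; it does not produce a multiplier annihilating a prescribed $(k-1)$-dimensional subspace while keeping $e_k$ alive. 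Iterated inner derivations alone cannot even do the two-element case in general: any such operator that kills $e_1$ also kills $e_1+1$, so genuinely general combinations $\sum_l\mu_{b_l,b_l'}$ are needed, and nothing in the proposal explains how density delivers them.

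For comparison, the paper supplies exactly this missing content by a dichotomy rather than by quoting Dixmier. Fix $v\in V\setminus(0)$ and consider $\phi_x(w)=wxv-vxw$, which is one of your multipliers and preserves every two-sided ideal. Either some $\phi_x$ is nonzero on $V$, in which case $\ker\phi_x$ and $\mathrm{im}\,\phi_x$ are nonzero of strictly smaller dimension with $\mathrm{Var}(V)\subset\mathrm{Var}(\ker\phi_x)\cup\mathrm{Var}(\mathrm{im}\,\phi_x)$, and one inducts on $\dim V$ (the dimension count $\dim\ker+\dim\mathrm{im}=d$ is what yields exactly $d$ elements); or $vxw=wxv$ for all $x\in A$, $w\in V$, and then the density theorem together with $\mathrm{Hom}_A(M,M)=\mathbb K$ and faithfulness of $M$ forces $w\in\mathbb Kv$ for every $w\in V$, i.e.\ $\dim V=1$. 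That dichotomy (or an explicit appeal to Dixmier's lemma as a black box, which is legitimate here since $M$ is faithful) is what your separation step needs; with it supplied, your triangularization is a correct and slightly different packaging of the same induction.
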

\begin{proof} 
Assume that there exists a counterexample to this lemma given by a finite-dimensional vector space $V$. 
Without loss of generality we can assume that $\dim_{\mathbb K} V$ is the least possible. 

If $d=\dim_{\mathbb K} V=1$ then every $v\in V\backslash(0)$ is contained in $I$ for all $I\in{\rm Var}(V)$.

We assume next that $d\ge2$. 
Fix $v\in V\backslash(0)$. 
For each $x\in A$ set
$$\phi_x: V\to A, \hspace{10pt} w\to wxv-vxw.$$
Assume that for some $x\in A, w\in V$ we have $\phi_x(w)\ne0$. 
Fix such $x$ and $w$. 
Put $V_1:={\rm Ker}\phi_x, V_2:={\rm Im}\phi_x$. 
Then $v\in V_1, 0\ne\phi_x(w)\in V_2$ and thus $V_1, V_2\ne (0)$. 
By definition we have $\dim_{\mathbb K} V_1+\dim_{\mathbb K} V_2=\dim_{\mathbb K} V$ and hence $\dim_{\mathbb K} V_1, \dim_{\mathbb K} V_2<\dim_{\mathbb K} V$. 
It is easy to see that if $I\in{\rm Var}(V)$ then either $I\in{\rm Var}(V_1)$ or $I\in{\rm Var}(V_2)$. 
By the assumption of the minimality of $\dim_{\mathbb K}V$ the statement of Lemma~\ref{L:stab} holds for $V_1, V_2$ providing the desired list $a_1,\ldots, a_d\in A\backslash(0)$.

We left to consider the case when $vxw=wxv$ for all $x\in A, w\in V$. 
Recall that $M$ is an exact simple $A$-module with ${\rm Hom}_{A}(M, M)=\mathbb K$. 
Pick $m\in M$ such that $vm\ne 0$ and pick $w\in V$ such that  $\mathbb Kw\ne\mathbb Kv$. 
Without loss of generality we assume that $wm\ne0$ --- if it is not the case we can replace $w$ by $w+v$. 

Assume $wm\notin\mathbb Kvm$. 
Density theorem implies that there exists $x\in A$ such that $xwm=0, xvm=m$ (here we use essentially that ${\rm Hom}_A(M, M)=\mathbb K$). 
Next we have $0=xwm=vxwm=wxvm=wm$. 
This contradicts our assumption $wm\ne0$. 

We left to consider the case $wm\in\mathbb Kvm$ or equivalently $wm=Cvm$ for some $C\in\mathbb K$. 
We claim that $wm'=Cvm'$ for all $m'\in M$. 
Indeed we have that for each $m'$ there exists $x\in A$ with $m'=xvm$ (here the simplicity of $M$ is essential). 
We have
$$wm'=wxvm=vxwm=Cvxvm=Cvm'$$
as claimed. 
Thus $(w-Cv)M=0$ and hence $M$ can't be an exact $A$-module.
\end{proof}
\end{proof}
\begin{proof}[Proof of Theorem~\ref{T:stab}(b)]
Denote by $\alpha$ an embedding $\mathbb K\to\widetilde{\mathbb K}$. 
It induces an embedding $A\to A\otimes_{\mathbb K}\widetilde{\mathbb K}$ which we will also denote by $\alpha$.

We claim that $a_1, a_2, \ldots$ can be chosen in such a way that all $a_i$th will belong to the image $\alpha(A)$. Indeed to construct $a_1, a_2, \ldots$ we have used an exhaustion $L_i$ by vector subspaces of $A$, elements $x$ together with images and kernels of some maps $\phi_x$. 
Note that if $\phi_x\ne 0$ for some $x\in A\otimes_{\mathbb K}\widetilde{\mathbb K}$ then $\phi_x\ne 0$ for some $x\in A$. 
This is implied by the linearity of the map $x\to \phi_x$ and the fact that $A$ generates $A\otimes_{\mathbb K}\widetilde{\mathbb K}$ as a  $\widetilde{\mathbb K}$-vector space. 
If we choose all $x$ from $A$ on each step then the considered vector spaces $V\subset A\otimes_{\mathbb K}\widetilde{\mathbb K}$ will be canonically identified with $(V\cap\alpha(A))\otimes_{\mathbb K}\widetilde{\mathbb K}$. 
Therefore we can pick $a_i$ in such a way that $a_i\in A$. 

Assume that $\widetilde{\mathbb K'}$ is an extension of $\widetilde{\mathbb K}$ and $I\subset A\otimes_{\mathbb K}\widetilde{\mathbb K'}$ is a nonzero two-sided ideal. 
Denote by $\widetilde{\mathbb K''}$ a certain extension of the field $\mathbb K$ for which $\widetilde{\mathbb K'}$ and $\widetilde{\mathbb K}$ are subextensions.  
Then $I\otimes_{\mathbb K'}\widetilde{\mathbb K''}$ is a two-sided ideal of $A\otimes_{\mathbb K}\widetilde{\mathbb K'}$ and hence it contains $a_i$ for all $i$ starting from a certain $N$. 
Thus $a_i\in I$ starting from a certain $N$.
\end{proof}

\begin{proof}[Proof of Theorem~\ref{T:free}] 
(a) Consider an extension $\widetilde{\mathbb K}$ of a field $\mathbb K$ such that $\widetilde{\mathbb K}$ is uncountable and algebraically closed. 
Applying Theorem~\ref{T:stab} to $F_n(\underline{\mathbb K})$ and extension $[\widetilde{\mathbb K}:\underline{\mathbb K}]$ we show that $F_n(\underline{\mathbb K})$ is E-CSP with respect to a list $f_1, f_2,\ldots\in F_n(\underline{\mathbb K})$. 
The algebra $F_n(\underline{\mathbb K})$ is countable and hence $F_n(\underline{\mathbb K})$ is E-CSP with respect to $F_n(\underline{\mathbb K})\backslash(0)$ as needed. 

(b) Algebra $F_{\mathbb N}(\mathbb F)$ has an exhaustive filtration by $F_n(\mathbb F)$ and every two-sided ideal of $F_{\mathbb N}(\mathbb F)$ intersects all elements of this filtration starting from a certain $N$. 
This and the result of (a) proves (b).

\end{proof}
\begin{proof}[Proof of Theorem~\ref{T:stadD}] 

Pick an exhaustion $L_1\subset L_2\subset\ldots$ of $A$ by a countable set of finite-dimensional vector subspaces of $A$. 
Such an exhaustion exists because $A$ is countable dimensional. 

The proof is based on the following lemma.
\begin{lemma}\label{Lbslr1} 
Let $V\subset A$ be a nonzero subspace of $A$ with $d:=\dim_{\mathbb K} V$ and let ${\rm Var}(V)$ be the set of ideals of $A$ which do have a nontrivial intersection with $V$ and such that $D_iI\subset I$ for all $i$. 
Then $\bigcap_{I\in {\rm Var}(V)}I\ne0.$\end{lemma}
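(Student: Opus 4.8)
The plan is to reproduce the inductive skeleton of the proof of Lemma~\ref{L:stab}, running an induction on $d=\dim_{\mathbb K}V$. In the commutative setting the commutator map $\phi_x$ used there is identically zero, so I would replace it by a derivation-twisted map, and the dichotomy ``some map is nonzero'' versus ``all maps vanish'' will be governed by the triviality of the differential center of $Q(A)$ rather than by exactness of a module. For the base case $d=1$, write $V=\mathbb Kv$ with $v\ne0$; any $I\in{\rm Var}(V)$ meets $V$ nontrivially, hence contains a nonzero scalar multiple of $v$ and therefore $v$ itself, so $v\in\bigcap_{I\in{\rm Var}(V)}I\ne0$.

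For the inductive step I would fix $v\in V\backslash(0)$ and, for each index $j$, introduce the $\mathbb K$-linear map
$$\psi_j\colon V\to A,\qquad w\mapsto D_j(w)\,v-w\,D_j(v).$$
Since $A$ is commutative one has $\psi_j(v)=0$, so $v\in{\rm Ker}\,\psi_j$ and the kernel is always nonzero. The argument then splits according to whether some $\psi_j$ is nonzero.

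Suppose $\psi_j(w)\ne0$ for some $j$ and some $w\in V$. Put $V_1:={\rm Ker}\,\psi_j$ and $V_2:={\rm Im}\,\psi_j$; then $V_1,V_2\ne(0)$ and $\dim_{\mathbb K}V_1+\dim_{\mathbb K}V_2=d$, so both dimensions are strictly smaller than $d$. The key point, which I expect to be the crux of the proof, is the inclusion ${\rm Var}(V)\subseteq{\rm Var}(V_1)\cup{\rm Var}(V_2)$: given $0\ne u\in I\cap V$ with $I$ a differential ideal, either $u\in V_1$, or $\psi_j(u)\ne0$ and then $\psi_j(u)=D_j(u)\,v-u\,D_j(v)\in I$ because $D_j(u)\in I$ (as $I$ is differential) and $I$ is an ideal, so that $0\ne\psi_j(u)\in I\cap V_2$. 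By the induction hypothesis the ideals $J_1:=\bigcap_{I\in{\rm Var}(V_1)}I$ and $J_2:=\bigcap_{I\in{\rm Var}(V_2)}I$ are nonzero; since $A$ is a domain, the product of a nonzero element of $J_1$ and a nonzero element of $J_2$ is a nonzero element of $J_1\cap J_2$, whence $J_1\cap J_2\ne0$. Every $I\in{\rm Var}(V)$ contains $J_1$ or $J_2$, hence contains $J_1\cap J_2$, which gives $\bigcap_{I\in{\rm Var}(V)}I\supseteq J_1\cap J_2\ne0$.

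It remains to show that the alternative, all $\psi_j$ vanishing, cannot occur once $d\ge2$, and this is exactly where the hypothesis on the differential center enters. Extending each $D_j$ to $Q(A)$, the quotient rule gives $D_j(w/v)=(D_j(w)\,v-w\,D_j(v))/v^2$, so the vanishing of every $\psi_j$ on $V$ means $D_j(w/v)=0$ for all $j$ and all $w\in V$; that is, $w/v$ lies in the differential center of $Q(A)$. By assumption this center is $\mathbb K$, whence $w\in\mathbb Kv$ for every $w\in V$, forcing $V=\mathbb Kv$ and $d=1$, contradicting $d\ge2$. Therefore for $d\ge2$ some $\psi_j$ is nonzero, the splitting above applies, and the induction closes.
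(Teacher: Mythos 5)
Your proposal is correct and follows essentially the same route as the paper: the same derivation-twisted maps $\psi_j(w)=D_j(w)v-wD_j(v)$, the same kernel/image splitting with ${\rm Var}(V)\subseteq{\rm Var}(V_1)\cup{\rm Var}(V_2)$, the same appeal to the quotient rule and the triviality of the differential center to rule out the degenerate case, and the same product-of-two-elements trick (using that $A$ is a domain) to conclude. The only difference is presentational — induction on $d$ rather than a minimal counterexample — and you usefully spell out the ``easy to see'' step that $\psi_j(u)\in I$ for a differential ideal $I$.
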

\begin{proof} Assume to the contrary that there exists a counterexample to this lemma given by a vector space $V$ of dimension $d=\dim_{\mathbb K}V$. 
Without loss of generality we can assume that $V$ has the least possible dimension. Pick $v\in V\backslash(0).$ 

If $\dim_{\mathbb K} V=1$. Then $v\in \bigcap_{I\in {\rm Var}(V)}I$ and this contradicts our assumption on $V$. 

Thus $\dim_{\mathbb K} V>1$. For each $i$ consider a linear operator $$\psi_{v; i}: V\to A \hspace{10pt}(w\to w\partial_iv-v\partial_iw).$$
Note that if $\psi_{v; i}(w)=0$ for some $w\in A$ then $\partial_i(\frac{w}{v})=\frac{\psi_{v; i}(w)}{v^2}=0$. 
The differential center of $Q(A)$ is trivial and hence for all $w\notin\mathbb K v$ there exists $i$ for which $\psi_{v; i}(w)\ne0$. 
Pick $w\in V\backslash(\mathbb K v)$ and pick the respective $i$. 
Set $V_1:={\rm Ker}(\psi_{v; i}), V_2:={\rm Im}(\psi_{v; i})$. 
It is easy to see that $\dim V_1, \dim V_2<\dim V$ and $V_1, V_2\ne (0)$.

Observe that if $I\cap V\ne(0)$ then either $I\cap V_1\ne(0)$ or $I\cap V_2\ne0$. 
This implies that ${\rm Var}(V)\subset{\rm Var}(V_1)\cup{\rm Var}(V_2)$. 
The assumption on the minimality of $\dim_{\mathbb K} V$ implies that there exists $a_1$ with $a_1\in I$ for all $I\in{\rm Var}(V_1)$ and $a_2$ with $a_2\in I$ for all $I\in{\rm Var}(V_2)$. 
Therefore we have $a=a_1a_2\in I$ for all $I\in{\rm Var}(V)$.
\end{proof}
Denote by $a_i\in A$ the elements attached to each $L_i$ via Lemma~\ref{Lbslr1}. 

Each two-sided ideal of $A$ intersects all $L_i$ starting from a certain $N$. 
Therefore each two-sided ideal of $A$ contains all $a_i$ starting from a certain $N$.
\end{proof}


\begin{thebibliography}{BLLM}
\bibitem[BC]{BC} J. Bell, P. Colak, {\em Primitivity of finitely presented monomial algebras}. J. Pure
Appl. Algebra {\bf 213:7} (2009), 1299–1305.

\bibitem[BS]{BS} R. Biswal, S. Sierra, {\em Ideals in enveloping algebras of affine Kac–Moody algebras}, Algebra and Number Theory {\bf 19:6} (2025), 1199--1230.

\bibitem[BLLM]{BLLM} J. Bell, S. Launois, O. Le\'on S\'anchez, and R. Moosa, {\em Poisson algebras via model theory and differential algebraic geometry}. Journal of the Euro. Math. Soc. {\bf 19} (2017), 2019--2049.

\bibitem[Dix]{Dix} J. Dixmier, Enveloping algebras, Grad. Stud. in Math. \textbf{11}. AMS, 1996. 
\bibitem[Dix2]{Dix2} J. Dixmier, {\em Id\'eaux primitifs dans les alg\`ebres enveloppantes}. J. Algebra {\bf 48} (1977), 99--112. 

\bibitem[F]{F} D. Farkas, {\em Baire category and Laurent extensions}. Canad. J. Math. {\bf 31:4} (1979), 824--830.

\bibitem[FL]{FL} E. Formanek, J. Lawrence, {\em The group algebra of the infinite symmetric group}. Israel J. Math. {\bf 23} (1976), 325--331. 


\bibitem[IP]{IP} M. Ignatyev, A. Petukhov, {\em The orbit method for locally nilpotent infinite-dimensional Lie algebras}. J. Algebra {\bf 585} (2021), 501--557, see also arXiv.org/abs/2004.01068. 

\bibitem[I]{I} R. Irving, {\em Primitive ideals of certain Noetherian algebras}. Math. Z. {\bf 169:1} (1979), 77--92.

\bibitem[IS]{IS} N. K. Iyudu, S. J. Sierra, {\em Enveloping algebras with just infinite Gelfand-Kirillov dimension}.  Arkiv f\"or Matematik {\bf 58:2} (2020), 285--306.

\bibitem[K]{K} A. R. Kemer, {\em On ideals of the group algebra of the infinite symmetric group over a field of characteristic p.} Math. Notes {\bf 92:3} (2012), 375--382.



\bibitem[L]{L} M. Lorenz, {\em Analogs of Clifford's theorem for polycyclic-by-finite groups}. Trans. Amer. Math. Soc. {\bf 254} (1979), 295--317.

\bibitem[MR]{MR} J.~McConnell, J.~Robson, Noncommutative Noetherian rings. AMS Grad. Studies in Math.~\textbf{30}, 1987.



\bibitem[PP1]{PP1} A. Petukhov, I. Penkov, {\em On ideals in the enveloping algebra of a locally simple Lie algebra}.  International Mathematical Research Notices {\bf 13} (2015), 5196--5228,  see also arXiv:1210.0466.


\bibitem[PP2]{PP2} A. Petukhov, I. Penkov, {\em On ideals in U$(\frak{sl}(\infty))$, U$(\frak o(\infty))$, U$(\frak{sp}(\infty))$}. Representation theory - current trends and perspectives, EMS Series of Congress Reports,  
European Mathematical Society (EMS), 2017, arXiv:1511.01733.

\bibitem[PP3]{PP3} A. Petukhov, I. Penkov, {\em Primitive ideals of U$(\frak{sl}(\infty))$}. Bull. London Math. Soc. {\bf 50} (2018), 435--448.

\bibitem[PS1]{PS1} A. V. Petukhov, S. J. Sierra, {\em Ideals in the enveloping algebra of the positive Witt algebra}. Algebr. Representation Theory {\bf 23:4} (2020), 1569--1599. 

\bibitem[PS2]{PS2} A. Petukhov, S. Sierra, {\em The Poisson spectrum of the symmetric algebra of the Virasoro algebra}. Compositio Mathematica {\bf 159:5} (2023), 933 -- 984.


\bibitem[PS3]{PS3} A. V. Petukhov, S. J. Sierra, {\em Primitive ideals for loop algebras and $H$-ideals}. work in progress. 

\bibitem[Ph]{Ph} S. A. Pihtilkov, {\em Primitivity of a free associative algebra with finitely many generators.} Russian Math. Surv. {\bf 29:1-175} (1974), 183--184.

\bibitem[Z]{Z} A. E. Zalesski, {\em Group rings of inductive limits of sign permuting groups.} Leningrad Math. J. {\bf 2:6} (1991), 1287--1303.





\end{thebibliography}
\end{document}